\newcommand{\eat}[1]{}
\newtheorem{theorem}{Theorem}
\newtheorem{definition}{Definition}
\newtheorem{remark}{Remark}
\newtheorem{proposition}{Proposition}
\newcommand{\Rmnum}[1]{\expandafter\@slowromancap\romannumeral #1@}
\let\NAT@parse\undefined
\title{\LARGE \bf
Combining Learning and Control in Linear Systems
}
\author{Andreas A. Malikopoulos, \emph{IEEE Senior Member}%
\thanks{This work was supported by NSF under Grants CNS-2149520 and CMMI-2219761.}%
\thanks{The author is with the School of Civil and Environmental Engineering, Cornell University, Ithaca, NY 14850, USA. (email: \tt\small{amaliko@cornell.edu}.)}
}
\begin{document}

\maketitle
\thispagestyle{empty}
\pagestyle{empty}

\begin{abstract}
In this paper, we provide a theoretical framework that separates the control and learning tasks in a linear system. This separation allows us to combine offline model-based control with online learning approaches and thus circumvent current challenges in deriving optimal control strategies in applications where a large volume of data is added to the system gradually in real time and not altogether in advance. We provide an analytical example to illustrate the framework.

\end{abstract}


\section{Introduction}
\PARstart{R}{einforcement} learning (RL) \cite{Sutton1998a,Bertsekas1996} has emerged as an adaptive method to control  systems  \cite{Sutton:1992ub} with unknown dynamics \cite{Kordabad:2023aa}.
There have also been research efforts on developing  learning approaches using Bayesian analysis to address such problems
\cite{Fisac:2019wb}.
Other approaches over the years have focused on direct or indirect RL methods including robust learning-based  \cite{Bouffard:2012wp,Aswani:2013ue}, learning-based model predictive control \cite{Zhang:2020wf,Rosolia:2018wv,Hewing:2020uv} on autonomous racing cars \cite{Rosolia:2020uo},  real-time learning  \cite{Malikopoulos2009,Malikopoulos2011} of powertain systems with respect to the driver's driving style \cite{Malikopoulos2010a,Malikopoulos2009a}, 
learning for traffic control  \cite{Wu2017FlowAA} for transferring optimal policies \cite{chalaki2020ICCA,jang2019simulation}, decentralized learning for stochastic games \cite{Arslan:2017vo}, learning for optimal social routing  \cite{Krichene:2018we} and congestion games \cite{Krichene:2015vx}, and learning for enhanced security against replay attacks in cyber-physical systems \cite{Sahoo:2020tx}.

The implications of the strategies derived using a model, which is typically different from the actual system, have been reported in \cite{Kara:2018vu}.  A recent paper \cite{Subramanian2020ApproximateIS} proposed approximate learning of an information state to address problems when the dynamics of the actual system are not known.  Other efforts have combined adaptive control with RL to derive control strategies in real time \cite{Guha2021OnlinePF}. Space constraints prevent us from discussing the complete list of papers reported in the literature in this area. Two survey papers \cite{Recht2018ATO,Kiumarsi:2018tq}, however, include a comprehensive review of the  RL approaches.

In some applications, we encounter a volume of data gradually incorporated into the system. To derive the optimal control strategy in such applications, we typically use a model \cite{Malikopoulos2016b}. However,  model-based control might not effectively facilitate optimal solutions partly due to the existing discrepancy between the model and the actual system. On the other hand, supervised learning approaches might not always facilitate robust solutions using training data derived offline. Similarly,  RL approaches might impose undesired implications on the system's robustness. 

In this paper, we investigate how to circumvent these challenges at the intersection of learning and control. 
We derive sufficient statistics that can represent the system's growing data. This sufficient statistics is called \textit{information state} of the system and takes values in a time-invariant space.  This information state can be used to derive \textit{separated control strategies}, which are related to the separation between the estimation of the information state and the derivation of the control strategy. Given this separation, for any control strategy at time $t$,  the evolution of the information state of the system does not depend on the control strategy at $t$ but only on the realization value of the control at  $t$ \cite{Kumar1986}. Thus, the evolution of the information states is separated from the choice of the current control strategy. 
Hence, the optimal control strategy is derived offline using the information state, which can be learned online using standard techniques  \cite{Gyorfi:2007aa,Brand:1999aa}  while data are incorporated into the system. 
This approach departs from traditional model-based and supervised (or unsupervised) learning approaches. The framework could effectively facilitate optimal solutions in a wide range of applications where a large volume of data is added to the system gradually in real time and not altogether in advance, such as emerging mobility systems, mobility markets, smart power grids, power systems, social media platforms, robot cooperation, and the Internet of Things.

The structure of the paper is organized as follows. In Section \ref{sec:2}, we present the formulation of the optimal control problem. In Section \ref{sec:3}, we introduce the separated control strategies. In Section \ref{sec:4}, we illustrate the framework with a simple analytical example. Finally, we draw concluding remarks in Section \ref{sec:5}.

\section{Problem Formulation}
\label{sec:2}
\subsection{Notation}
In our exposition, we denote by $\mathbb{E}[\cdot]$ the expectation of random variables,  by $\mathbb{P}(\cdot)$  the probability of an event, and by $p(\cdot)$ the probability density function. 
We denote by $\mathbb{E}^{\bf{g}}[\cdot]$, $\mathbb{P}^{\bf{g}}(\cdot)$, and $p^{\bf{g}}(\cdot)$  that the expectation, probability, and probability density function, respectively, depending on the choice of the control strategy $\bf{g}$. 
Random variables are denoted with upper case letters, and their realizations with lower case letters, e.g., for a random variable $X_t$, $x_t$ denotes its realization. In some occasions, we denote the expected value of a random variable with lower case letter, e.g., $\mathbb{E}[X_t]=x_t$. Subscripts denote time. The shorthand notation $X_{0:T}$ denotes the the vector of random variables $(X_0,\ldots, X_T)$, and the shorthand notation $x_{0:T}$  denotes the vector of their realization $(x_0,\ldots, x_T)$.

\subsection{Modeling Framework} 
\label{sec:2a}

We  consider a linear system  in which a volume of data is added to the system gradually and not altogether in advance. 
We aim to find sufficient statistics that can be used to compress the increasing data of the system without loss of optimality  \cite{Striebel1965}. These statistics are a conditional probability of the system's state at time $t\in\mathbb{R}_{\ge0}$ given all data available up until $t$, which is called the information state of the system. We use this information state to derive separated control strategies. By deriving separated control strategies, we can derive the optimal control strategy offline with respect to the information state and then use  learning methods  to learn the information state online.

\begin{figure}
	\centering
	\includegraphics[width=1\linewidth, keepaspectratio]{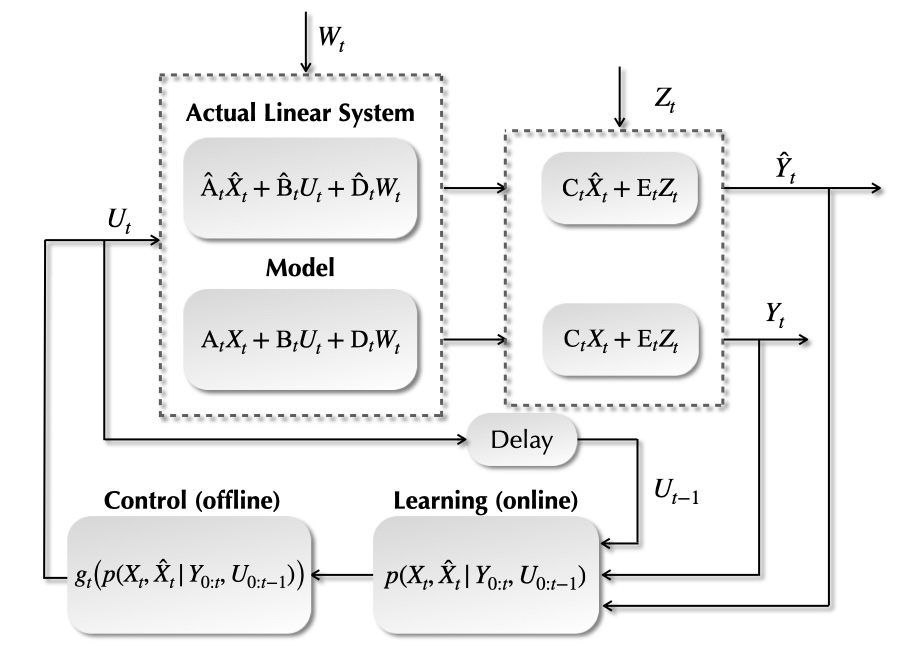} 
	\caption{The proposed framework on separating learning and control, where the separated control strategy is applied to both the actual system and the system's model  in parallel.}%
	\label{fig:2}%
\end{figure}

In particular, in our framework illustrated in Fig. \ref{fig:2}, we seek to use the actual linear system that we wish to optimally control online, in parallel with a model of the system that is available.  Let $X_t$, $t = 0,1,\ldots,T$, $T\in\mathbb{N},$ be a random variable that corresponds to the state  of the system's model and $\hat{X}_t$, $t = 0,1,\ldots,T$, be a random variable that corresponds to the state of the actual system. Both  $X_t$ and $\hat{X}_t$  are defined on an appropriate probability space and take values in $\mathbb{R}^n$, $n\in\mathbb{N}$. The control $U_t$ of the actual system is a random variable  defined on the same probability space and takes values in $\mathbb{R}^m$, $m\in\mathbb{N}$.  Given an initial state $X_0$, the model of the linear system is 
\begin{align}\label{eq:state}
	X_{t+1}=\textbf{A}_t  X_t +\textbf{B}_t U_t+\textbf{D}_t W_t, ~t = 0,1,\ldots,T-1,
\end{align}
where  $\textbf{A}_t, \textbf{B}_t,$ and $\textbf{D}_t$ are matrices of appropriate dimensions, and $W_t\in\mathbb{R}^r$, $r\in\mathbb{N}$, is a random variable that corresponds to the external, uncontrollable disturbance. 
Given the same initial state $X_0$, the actual system  is represented by 
\begin{align}\label{eq:statereal}
	\hat{X}_{t+1}=\hat{\textbf{A}}_t \hat{X}_t + \hat{\textbf{B}}_t U_t +\hat{\textbf{D}}_t W_t, ~t = 0,1,\ldots,T-1,
\end{align}
where $\hat{\textbf{A}}_t, \hat{\textbf{B}}_t,$ and $\hat{\textbf{D}}_t$ are matrices of appropriate dimensions.  The sequence $\{W_t; ~t = 0,1,\ldots,T-1\}$ is a sequence of independent random variables independent of the initial state $X_0$.

At the time $t = 0,1,\ldots,T-1$, we make an observation $Y_t\in\mathbb{R}^p$, $p\in\mathbb{N},$ of the model's output described by the observation equation 
\begin{align}\label{eq:observe}
	Y_t = \textbf{C}_t X_t +\textbf{E}_t Z_t, 
\end{align}
where $\textbf{C}_t, \textbf{E}_t$ are matrices of appropriate dimensions, and $Z_t\in\mathbb{R}^s$, $s\in\mathbb{N}$, is a random variable that represents the sensor's noise. Similarly, at time $t$, we make an observation $\hat{Y}_t\in\mathbb{R}^p$, $p\in\mathbb{N},$ of the actual system, described by the observation equation 
\begin{align}\label{eq:observereal}
	\hat{Y}_t = \textbf{C}_t \hat{X}_t +\textbf{E}_t Z_t,
\end{align}
Note  $\{Z_t\},$ ~$t=0,\ldots,T-1,$ is a sequence of independent random variables that are also independent of $\{W_t\}$,~$t=0,\ldots,T-1,$ and the initial state $X_0$. 

A control strategy $\textbf{g}=\{g_t\}$ of the system yields a decision 
\begin{align}\label{eq:control}
	U_t =g_t(\hat{Y}_{0:t}, U_{0:t-1}),~t=0,\ldots,T-1,
\end{align}
where  the measurable function $g_t$ is the control law. The feasible set of the control strategies is $\mathcal{G}$, i.e., $\textbf{g}\in\mathcal{G}$.

\textbf{Problem 1}~[Actual linear system]: \label{problem1}
Derive the optimal control strategy $\textbf{g}^*\in\mathcal{G}$ which minimizes the following cost of the actual system, 
\begin{align}\label{eq:cost}
	\hat{J}(\textbf{g})=\mathbb{E}^{\textbf{g}}\left[\sum_{t=0}^{T-1} c_t(\hat{X}_t, U_t)+c_T(\hat{X}_T)\right],
\end{align}
where the expectation in \eqref{eq:cost} is taken with respect to the joint probability distribution of $\hat{X}_t$ and  $U_t$ imposed by the control strategy $\textbf{g}\in\mathcal{G}$; $c_t(\cdot, \cdot): \mathcal{X}\times \mathcal{U}_t \to\mathbb{R}$ 
is the  cost function of the system at $t$, and $c_T(\cdot): \mathcal{X} \to\mathbb{R}$ is the  cost function at $T$. The probability distribution of the primitive random variables $X_0$,  $\{W_t\}$, $\{Z_t\}$, the cost functions $\{c_t(\cdot,\cdot)\}$ for $t=0,\ldots,T-1$ and $c_T(\cdot)$, and the matrices $\textbf{C}_t, \textbf{E}_t$ for $t=0,\ldots,T-1$ are all known. However, the matrices $\hat{\textbf{A}}_t, \hat{\textbf{B}}_t, \hat{\textbf{D}}_t$ are not known for $t=0,\ldots,T-1$.


\section{Separating Learning and Control  Tasks} \label{sec:3}
Let $\textbf{g}=\{g_t;~ t=0,\ldots,T-1\}$, $\textbf{g}\in\mathcal{G},$ be a control strategy which yields a decision $U_t =g_t(Y_{0:t}, U_{0:t-1})$ using the model of the linear system. 
We establish an  information state by using in parallel the system's model and the actual system as shown in Fig. \ref{fig:2}. 

The probability density function $p(X_{t}, \hat{X}_t ~|~Y_{0:t}, U_{0:t-1})$ is the information state (defined formally next), denoted by $\Pi_{t}(Y_{0:t}, U_{0:t-1})(X_{t},\hat{X}_t)$. To simplify notation, in what follows, the information state $\Pi_{t}(Y_{0:t}, U_{0:t-1})$ $(X_{t},\hat{X}_{t})$ at $t$ is denoted by $\Pi_t$ without the arguments, which will be used only if it is required in our exposition.

\begin{definition} \label{def:infoteam}
	The information state, $\Pi_{t}(Y_{0:t}, U_{0:t-1})$ $(X_{t},\hat{X}_{t})$,  is (a) a function of  $(Y_{0:t}, U_{0:t-1})$, and (b) its evolution $\Pi_{t+1}(Y_{0:t+1}, U_{0:t})(X_{t+1},\hat{X}_{t+1})$  at the next step $t+1$ depends on $\Pi_{t}(Y_{0:t}, U_{0:t-1})(X_{t},\hat{X}_{t})$, $Y_{t+1}$, and $U_{t}$.
\end{definition}

\begin{theorem}\label{theo:y_t}
	The information state $\Pi_{t}(Y_{0:t}, U_{0:t-1})(X_{t},\hat{X}_t)$ does not depend on the control strategy $\textbf{g}\in\mathcal{G}$.
	Furthermore, there exists a function $\phi_t$ such that
	\begin{align}\label{eq:xt1}
		&\Pi_{t+1}(Y_{0:t+1}, U_{0:t})(X_{t+1},\hat{X}_{t+1}) 
		\nonumber\\
		&= \phi_t\big[ \Pi_{t}(Y_{0:t}, U_{0:t-1})(X_{t},\hat{X}_t), Y_{t+1}, U_t \big],
	\end{align}
	for all $t=0,1,\ldots, T-1.$
\end{theorem}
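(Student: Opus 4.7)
The plan is to prove both assertions by induction on $t$, using Bayes' rule together with the fact that under any admissible strategy $\textbf{g}\in\mathcal{G}$, the control $U_t=g_t(Y_{0:t},U_{0:t-1})$ is $\sigma(Y_{0:t},U_{0:t-1})$-measurable. For the base case $t=0$, the density $\Pi_0(Y_0)(X_0,\hat{X}_0)=p(X_0,\hat{X}_0\mid Y_0)$ is determined entirely by the joint prior on $(X_0,\hat{X}_0)$, the observation equation \eqref{eq:observe}, and the density of $Z_0$; no control has been applied, so the base case satisfies the strategy-independence property trivially.

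For part (a), at step $t$ I would factor the strategy-dependent joint density of $(X_{0:t},\hat{X}_{0:t},Y_{0:t},U_{0:t-1})$ into two pieces: (i) a dynamics component built from the primitives $X_0$, $W_{0:t-1}$, $Z_{0:t}$ propagated through \eqref{eq:state}--\eqref{eq:observereal} with the realized controls $u_{0:t-1}$ treated as fixed inputs, and (ii) a strategy-dependent factor $\prod_{s=0}^{t-1}\delta\bigl(u_s - g_s(y_{0:s},u_{0:s-1})\bigr)$. Inserting this factorization into Bayes' rule for $p^{\textbf{g}}(x_t,\hat{x}_t\mid y_{0:t},u_{0:t-1})$, the strategy-dependent factor appears identically in numerator and denominator and cancels, leaving a density that is independent of $\textbf{g}$.

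For part (b), I would invoke the two standard steps of Bayesian filtering. First comes a prediction step: since $U_t$ is measurable with respect to $(Y_{0:t},U_{0:t-1})$, one has $p(x_t,\hat{x}_t\mid Y_{0:t},U_{0:t})=\Pi_t(x_t,\hat{x}_t)$, and marginalising the transition kernel determined by \eqref{eq:state}--\eqref{eq:statereal} yields
\begin{align*}
p(x_{t+1},\hat{x}_{t+1}\mid Y_{0:t},U_{0:t}) = \int p(x_{t+1},\hat{x}_{t+1}\mid x_t,\hat{x}_t,U_t)\,\Pi_t(x_t,\hat{x}_t)\,dx_t\,d\hat{x}_t.
\end{align*}
Then a correction step uses the observation likelihood $p(y_{t+1}\mid x_{t+1})$ obtained from \eqref{eq:observe} and the density of $Z_{t+1}$, which via Bayes' rule gives
\begin{align*}
\Pi_{t+1}(x_{t+1},\hat{x}_{t+1}) = \frac{p(y_{t+1}\mid x_{t+1})\,p(x_{t+1},\hat{x}_{t+1}\mid Y_{0:t},U_{0:t})}{\int p(y_{t+1}\mid x'_{t+1})\,p(x'_{t+1},\hat{x}'_{t+1}\mid Y_{0:t},U_{0:t})\,dx'_{t+1}\,d\hat{x}'_{t+1}}.
\end{align*}
The right-hand side depends only on $\Pi_t$, $Y_{t+1}$, and $U_t$, which defines the required map $\phi_t$ and closes the induction.

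The main obstacle I anticipate is structural rather than analytical: the matrices $\hat{\textbf{A}}_t,\hat{\textbf{B}}_t,\hat{\textbf{D}}_t$ are unknown, so the transition kernel entering $\phi_t$ is not explicitly available; the theorem only asserts existence of $\phi_t$, which is consistent with this, but one must be careful not to inadvertently assume access to these matrices in the argument. A secondary technical point is that $X_t$ and $\hat{X}_t$ share the noises $W_t$ and $Z_t$, so the joint transition kernel may be singular with respect to Lebesgue measure on $\mathbb{R}^{2n}$; working with regular conditional distributions (or exploiting the explicit linear-Gaussian structure when applicable) sidesteps this issue without changing the form of $\phi_t$.
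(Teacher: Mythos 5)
Your argument is correct, but note that the paper itself contains no proof of this theorem: it only remarks that the result is equivalent to Theorem~2 of an external reference under a classical information structure with perfect recall. What you supply is the standard self-contained argument that such a result rests on: for strategy-independence, the factorization of the joint density into strategy-independent primitive kernels times the policy factors $\prod_{s}\delta\bigl(u_s-g_s(y_{0:s},u_{0:s-1})\bigr)$, which cancel in the Bayes ratio because they are functions only of the conditioning variables; for the recursion, the prediction--correction steps of Bayesian filtering, which exhibit $\phi_t$ explicitly as a functional of $\Pi_t$, $Y_{t+1}$, and the realized control value $u_t$. Two points deserve sharpening. First, your identity $p(x_t,\hat{x}_t\mid Y_{0:t},U_{0:t})=\Pi_t(x_t,\hat{x}_t)$ holds precisely because $U_t$ is a deterministic, perfect-recall function of $(Y_{0:t},U_{0:t-1})$ --- the very hypothesis under which the paper says the cited result applies --- so that restriction should be stated rather than left implicit; the cleaner formulation is that $\phi_t$ depends on the realized value $u_t$ and not on the law $g_t$, which is exactly what your cancellation delivers. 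Second, your two flagged obstacles are real but benign, and you resolve them correctly: existence of $\phi_t$ requires only that the transition kernel of $(X_{t+1},\hat{X}_{t+1})$ exists, not that $\hat{\textbf{A}}_t,\hat{\textbf{B}}_t,\hat{\textbf{D}}_t$ be known, and the shared noises $W_t$, $Z_t$ make the joint kernel singular, which is handled by working with regular conditional distributions rather than Lebesgue densities. In short, your proof is a legitimate and more informative substitute for the citation the paper relies on.
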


The result of Theorem \ref{theo:y_t} is equivalent to the result of \cite[Theorem 2]{Malikopoulos2022a} when the system's information structure  is classical \cite{Yuksel2013,Schuppen2015} and the controller has perfect recall \cite{bertsekas1995dynamic,Kumar1986}.


\begin{definition}\label{def:septeam}
	A control strategy $\textbf{g}\in\mathcal{G}$, $\textbf{g}=\{g_t\}$,~$t=0,\ldots,T-1$, is called \textit{separated} if $g_t$ depends on $Y_{0:t}=(Y_0,\ldots, Y_{t})$ and  $U_{0:t-1}=(U_0,\ldots, U_{t-1})$  through the information state, i.e., $U_t  = g_t\big(\Pi_{t}(Y_{0:t}, U_{0:t-1})(X_{t},\hat{X}_{t})\big)$. Let $\mathcal{G}^s\subseteq\mathcal{G}$ denote the set of all separated control strategies.
\end{definition}

Since the dynamics of the actual system are not known, we cannot solve Problem $1$. Thus, to obtain the optimal strategy in Problem $1$, we formulate the following problem that we solve offline using the system's model \eqref{eq:state}.

\textbf{Problem 2:} \label{problem2}
Derive offline the optimal separated strategy $\textbf{g}^*\in\mathcal{G}^s$ to minimize the following cost function
\begin{align}			
	J&(\textbf{g};\hat{x}_{0:T})
	= \mathbb{E}^{\textbf{g}}\Bigg[\sum_{t=0}^{T-1}\Big[c_t(X_t, U_t)+ \beta \cdot|Y_{t+1}- \hat{Y}_{t+1}|^2\Big]\nonumber\\
	&+c_T(X_T) \Bigg], \nonumber
\end{align}	
or, using \eqref{eq:observe} and \eqref{eq:observereal}, 
\begin{align}
 J&(\textbf{g};\hat{x}_{0:T})
 = \mathbb{E}^{\textbf{g}}&\Bigg[\sum_{t=0}^{T-1}\Big[c_t(X_t, U_t)+ \beta \cdot|X_{t+1}- \hat{X}_{t+1}|^2\Big]\nonumber\\
	&+c_T(X_T) \Bigg], \label{eq:costreal}	
\end{align}	
where  $\beta$ adjusts the units of the norm accordingly, while the norm penalizes the discrepancy between the expected values of the state of the system's model and the state of the actual system. Since we solve \eqref{eq:costreal} offline using model \eqref{eq:state}, no information about the actual system is available, and thus the expected values $\hat{x}_{0:T}=(\hat{x}_0,\ldots, \hat{x}_T)$ of the states $\hat{X}_{0:T}=(\hat{X}_0,\ldots, \hat{X}_T)$ of the actual system are not known. Hence, when we derive the optimal control strategy $\textbf{g}^*$, it is parameterized with respect to all possible values $\hat{x}_{0:T}$. 

Next, to  obtain offline the optimal separated control strategy in Problem 2, we use the information state  $\Pi_{t}(Y_{0:t}, U_{0:t-1})(X_{t},\hat{X}_{t})$. 
It can be shown \cite{Malikopoulos2022a} that we can derive a classical dynamic program decomposition with respect to $\Pi_{t}$ to yield  a  separated control strategy, namely, a control strategy $\textbf{g}=\{g_t\}$, $t=0,\ldots,T-1$ where $g_t$ depends on $Y_{0:t+1}$ and  $U_{0:t}$ only through the information state, i.e., $U_t  = g_t\big(\Pi_{t}(Y_{0:t}, U_{0:t-1})(X_{t},\hat{X}_{t})\big)$. 

The separated control strategy is derived offline, thus, it is parameterized with respect to the potential expected values $\hat{x}_{0:T}$ of the state $\hat{X}_{t}$  of the actual system. Then, we apply the parameterized strategy to the actual system and the system's model  in parallel  (Fig. \ref{fig:2}), and we collect data from both. Using these data, we  compute  $\Pi_{t}(Y_{0:T}, U_{0:T-1})(X_{t+1},\hat{X}_{t+1})$  online.

\begin{proposition} \label{theo:CPSstate}
	The information state $\Pi_{t}(Y_{0:t}, U_{0:t-1})(X_{t},$ $\hat{X}_{t})$ of the system illustrated in Fig. \ref{fig:2} can be represented as a function of  $p(X_{t} ~|~ Y_{0:t}, U_{0:t-1})$, $p(\hat{X}_{t} ~|~ \hat{Y}_{0:t}, U_{0:t-1})$, and $p(\hat{Y}_{0:t}~|~U_{{0:t}-1})$.
\end{proposition}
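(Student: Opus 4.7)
The plan is to derive the claimed representation by decomposing the joint conditional density $\Pi_t = p(X_t, \hat{X}_t \mid Y_{0:t}, U_{0:t-1})$ through successive applications of the chain rule, the law of total probability, and Bayes' rule, combined with the probabilistic structure of the system \eqref{eq:state}--\eqref{eq:observereal} and the control law \eqref{eq:control}. The idea is to introduce the actual observation history $\hat{Y}_{0:t}$ as an auxiliary conditioning variable, since two of the three densities listed in the proposition involve $\hat{Y}_{0:t}$ while the defining one does not.

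First, I would marginalize over $\hat{Y}_{0:t}$ to write
\begin{align*}
\Pi_t = \int p(X_t, \hat{X}_t \mid Y_{0:t}, \hat{Y}_{0:t}, U_{0:t-1}) \, p(\hat{Y}_{0:t} \mid Y_{0:t}, U_{0:t-1}) \, d\hat{Y}_{0:t},
\end{align*}
and then use Bayes' rule on the second factor to express it through $p(\hat{Y}_{0:t} \mid U_{0:t-1})$, which is the third density listed in the proposition, together with the likelihood $p(Y_{0:t} \mid \hat{Y}_{0:t}, U_{0:t-1})$ that is computable from the observation equations \eqref{eq:observe} and \eqref{eq:observereal}. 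Next, applying the chain rule to the integrand and exploiting the fact that conditional on $(Y_{0:t}, U_{0:t-1})$ the model-side marginal reduces to $p(X_t \mid Y_{0:t}, U_{0:t-1})$, while conditional on $(\hat{Y}_{0:t}, U_{0:t-1})$ the actual-side marginal reduces to $p(\hat{X}_t \mid \hat{Y}_{0:t}, U_{0:t-1})$, I would match the remaining two factors to the first two densities in the proposition. Theorem \ref{theo:y_t} guarantees that this whole construction does not depend on the choice of $\textbf{g}\in\mathcal{G}$, so the representation is well defined.

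The main obstacle will be justifying the factorization of $p(X_t, \hat{X}_t \mid Y_{0:t}, \hat{Y}_{0:t}, U_{0:t-1})$ into separate model-side and actual-side filtering densities. Because $X_t$ and $\hat{X}_t$ are driven by the common primitives $X_0$ and $\{W_\tau\}_{\tau=0}^{t-1}$, and the observations $Y_{0:t}$ and $\hat{Y}_{0:t}$ share the sensor noise $\{Z_\tau\}_{\tau=0}^{t-1}$, a careful argument is needed to show that once both observation histories are conditioned upon, the residual couplings between $X_t$ and $\hat{X}_t$ are absorbed into the respective filtering densities and contribute no information beyond what is already carried by the three listed quantities. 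The control history, being $\hat{Y}_{0:t-1}$-measurable through \eqref{eq:control}, introduces no additional dependence. Once this conditional-independence step is in place, the three densities together with the known matrices $\textbf{C}_t$, $\textbf{E}_t$ and the statistics of the primitive random variables suffice to reconstruct $\Pi_t$, which proves the claim.
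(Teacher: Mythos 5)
Your route has a genuine gap in two places, and it does not reach the representation the proposition actually claims. First, the step you yourself flag as ``the main obstacle''---the factorization of $p(X_t,\hat{X}_t\mid Y_{0:t},\hat{Y}_{0:t},U_{0:t-1})$ into a model-side and an actual-side filtering density---is the entire content of the proposition, and you leave it unestablished. The paper never needs that joint conditioning on both observation histories: it peels $\hat{X}_t$ off first by the chain rule, asserts the single conditional independence $p(\hat{X}_t\mid X_t,Y_{0:t},U_{0:t-1})=p(\hat{X}_t\mid U_{0:t-1})$, obtains $\Pi_t=p(\hat{X}_t\mid U_{0:t-1})\,p(X_t\mid Y_{0:t},U_{0:t-1})$, and only then introduces $\hat{Y}_{0:t}$ by applying the law of total probability to the marginal $p(\hat{X}_t\mid U_{0:t-1})$ alone---an identity that requires no independence whatsoever. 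Your ordering forces you to justify a strictly stronger conditional-independence statement than the paper's, and you do not supply the argument.

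Second, your Bayes step on $p(\hat{Y}_{0:t}\mid Y_{0:t},U_{0:t-1})$ introduces the likelihood $p(Y_{0:t}\mid \hat{Y}_{0:t},U_{0:t-1})$, which you claim is computable from the observation equations \eqref{eq:observe} and \eqref{eq:observereal}. It is not: since $Y_\tau-\hat{Y}_\tau=\textbf{C}_\tau(X_\tau-\hat{X}_\tau)$, this likelihood depends on the joint law of $(X_\tau,\hat{X}_\tau)$, which involves the unknown matrices $\hat{\textbf{A}}_\tau,\hat{\textbf{B}}_\tau,\hat{\textbf{D}}_\tau$. That factor is neither one of the three densities listed in the proposition nor an offline-computable kernel, so even granting your factorization, you would have represented $\Pi_t$ as a function of four quantities, one of which is unavailable. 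The fix is to abandon the marginalization-first plan and follow the chain-rule-first decomposition, where $\hat{Y}_{0:t}$ enters only through the unconditional expansion $p(\hat{X}_t\mid U_{0:t-1})=\sum_{\hat{Y}_{0:t}}p(\hat{X}_t\mid\hat{Y}_{0:t},U_{0:t-1})\,p(\hat{Y}_{0:t}\mid U_{0:t-1})$ and no cross-likelihood ever appears.
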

\begin{proof}
	Recall 
	\begin{align}	
		\Pi_{t}(Y_{0:t}, U_{0:t-1})(X_{t},\hat{X}_t) =p(X_{t},\hat{X}_t~|~Y_{0:t}, U_{0:t-1}).
	\end{align}
	Next,
	\begin{align}	
		&p(X_{t},\hat{X}_t~|~Y_{0:t}, U_{0:t-1})\nonumber\\
		&=\frac{p(\hat{X}_t~|~ X_{t},Y_{0:t}, U_{0:t-1})\cdot p(X_t, Y_{0:t}, U_{0:t-1})}{p(Y_{0:t}, U_{0:t-1})} \nonumber\\
		&=\frac{p(\hat{X}_t~|~ U_{0:t-1})\cdot p(X_t, Y_{0:t}, U_{0:t-1})}{p(Y_{0:t}, U_{0:t-1})} \nonumber\\
		&= p(\hat{X}_t~|~ U_{0:t-1})\cdot p(X_{t} ~|~ Y_{0:t}, U_{0:t-1}).
		\label{theoCPSstate:1b}	
	\end{align}	
	In the second equality, we used the fact that $\hat{X}_t$ does not depend on $X_t$ and $Y_{0:t}$, and in the third equality, we applied Bayes' rule.  
	
	Next, we write the first term in \eqref{theoCPSstate:1b} as follows
	\begin{align}
		p(\hat{X}_t~|~ U_{0:t-1})=\sum_{\hat{Y}_{0:t}} p(\hat{X}_t~|~\hat{Y}_{0:t}, U_{0:t-1})\cdot p(\hat{Y}_{0:t}~|~ U_{0:t-1}).
		\label{theoCPSstate:1c}	
	\end{align}
	Substituting \eqref{theoCPSstate:1c} into \eqref{theoCPSstate:1b}, the result follows.
\end{proof}

\begin{remark} \label{rem:CPSstate}
	The conditional probability $p(X_{t}~|~Y_{0:t},$ $U_{0:t-1})$ can be obtained easily using the model offline. The conditional probability $p(\hat{X}_{t}~|~\hat{Y}_{0:t}, U_{0:t-1})$ can be obtained from the Kalman filter to estimate $\hat{X}_{t}$ first,  and then through recursive equations starting from the initial prior $p(\hat{X}_{0}~|~\hat{Y}_{0}, U_{0})$. The conditional probability $p(\hat{Y}_{0:t}~|~U_{{0:t}-1})$  can be obtained using standard approaches \cite{Gyorfi:2007aa,Brand:1999aa}. Ongoing research focuses on enhancing our understanding of the computational implications in learning $p(\hat{Y}_{0:t}~|~U_{{0:t}-1})$ in real time.
\end{remark}

As we operate both the actual system and the model using the separated control strategy (Fig. \ref{fig:2}), we compute $p(\hat{X}_{t} ~|~ \hat{Y}_{0:t}, U_{0:t-1})$ and learn $p(\hat{Y}_{0:t}~|~U_{{0:t}-1})$ that allows us to compute the information state $\Pi_{t}(Y_{0:t}, U_{0:t-1})(X_{t},$ $\hat{X}_{t})$ (the conditional probability ($p(X_{t} ~|~ Y_{0:t}, U_{0:t-1})$ is known a priori from the model). Next, we show that when the information state $\Pi_{t}(Y_{0:t}, U_{0:t-1})(X_{t},$ $\hat{X}_{t})$ becomes known, then the separated control strategy is optimal for the actual system.

\begin{theorem} \label{theo:CPSmodel}
	Let $\textbf{g}\in\mathcal{G}^s$ be an optimal separated control strategy parameterized with respect to $\hat{x}_{0:T}$, derived offline using the system's model, that minimizes the following cost function,
	\begin{align}
		J(\textbf{g};\hat{x}_{0:T})&\coloneqq \mathbb{E}^{\textbf{g}}\Bigg[\sum_{t=0}^{T-1}\Big[ c_t(X_t,U_t)+ \beta\cdot|X_{t+1} - \hat{X}_{t+1}|^2\Big] \nonumber\\&+ c_T(X_T)  \Bigg],
		\label{theo:CPSmodelgo}	
	\end{align}	
	in Problem 2. Then, if $p(X_{t},\hat{X}_{t}~|~Y_{0:t}, U_{0:t-1})$ becomes known, then $\textbf{g}$ is also optimal for Problem 1, 
	\begin{align}\label{eq:CPScost}
		\hat{J}(\textbf{g})=\mathbb{E}^{\textbf{g}}\left[\sum_{t=0}^{T-1} c_t(\hat{X}_t, U_t)+c_T(\hat{X}_T)\right].
	\end{align}
\end{theorem}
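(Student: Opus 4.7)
The plan is to establish that, once the information state $\Pi_{t}(Y_{0:t}, U_{0:t-1})(X_{t},\hat{X}_{t}) = p(X_t, \hat{X}_t \mid Y_{0:t}, U_{0:t-1})$ becomes known, the Problem 2 cost in \eqref{theo:CPSmodelgo}, evaluated at the realized expected trajectory $\hat{x}_{0:T}$, reduces to the Problem 1 cost in \eqref{eq:CPScost} in expectation, whence the same $\textbf{g}\in\mathcal{G}^s$ minimizes both.

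First, I would invoke Proposition \ref{theo:CPSstate} together with Remark \ref{rem:CPSstate} to observe that knowing $\Pi_t$ allows us to recover the marginal $p(\hat{X}_t \mid Y_{0:t}, U_{0:t-1})$ by integrating out $X_t$. In particular, the expected values $\hat{x}_{0:T}$ of $\hat{X}_{0:T}$, which were unavailable when the parameterized strategy was constructed offline, become computable. Substituting these realized values into the parameterized separated strategy $\textbf{g}(\cdot;\hat{x}_{0:T})$ yields a concrete closed-loop control for the actual system.

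Second, I would analyze the role of the penalty term $\beta\cdot|X_{t+1}-\hat{X}_{t+1}|^2$. Its purpose is precisely to force the model trajectory $X_t$ to track $\hat{X}_t$ in expectation when the correct parameters $\hat{x}_{0:T}$ are plugged in. At the optimum, $\mathbb{E}^{\textbf{g}}[X_t]=\mathbb{E}^{\textbf{g}}[\hat{X}_t]=\hat{x}_t$ for each $t$, so that $\mathbb{E}^{\textbf{g}}[c_t(X_t,U_t)] = \mathbb{E}^{\textbf{g}}[c_t(\hat{X}_t, U_t)]$ and the penalty vanishes; the cost in \eqref{theo:CPSmodelgo} then collapses onto \eqref{eq:CPScost}. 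Consequently, minimizing $J(\textbf{g};\hat{x}_{0:T})$ over $\mathcal{G}^s$ is equivalent to minimizing $\hat{J}(\textbf{g})$ over $\mathcal{G}^s$, and a final appeal to Theorem \ref{theo:y_t} together with the standard separation/perfect-recall argument extends the optimality from $\mathcal{G}^s$ to all of $\mathcal{G}$.

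The main obstacle will be the second step. Because the unknown matrices $\hat{\textbf{A}}_t,\hat{\textbf{B}}_t,\hat{\textbf{D}}_t$ differ from the model matrices $\textbf{A}_t,\textbf{B}_t,\textbf{D}_t$, matching the first moments of $X_t$ and $\hat{X}_t$ does not in general equate their full distributions, so the identity $\mathbb{E}^{\textbf{g}}[c_t(X_t,U_t)] = \mathbb{E}^{\textbf{g}}[c_t(\hat{X}_t,U_t)]$ will generally require either an additional regularity hypothesis on $c_t$ (e.g., affinity or quadratic structure in its first argument, so that only the matched means enter), or a limiting argument with $\beta\to\infty$ combined with continuity of $c_t$. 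Making this moment-to-cost transfer rigorous without access to $\hat{\textbf{A}}_t,\hat{\textbf{B}}_t,\hat{\textbf{D}}_t$ is the technical core of the proof.
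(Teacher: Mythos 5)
Your proposal follows the same central mechanism as the paper's proof: the quadratic penalty forces the model trajectory to track the actual one, and once the tracking error vanishes the Problem 2 cost collapses onto the Problem 1 cost, so the same separated strategy minimizes both. Where you diverge is in how ``tracking'' is formalized. The paper asserts that at the per-stage optimum $\mathbb{E}^{\textbf{g}}[|X_{t+1}-\hat{X}_{t+1}|^2]=0$, i.e., $X_{t+1}=\hat{X}_{t+1}$ almost surely; under that condition $c_t(X_t,U_t)=c_t(\hat{X}_t,U_t)$ holds pathwise, and the moment-to-cost transfer you flag as the technical core becomes a non-issue --- no regularity on $c_t$ and no $\beta\to\infty$ limit is needed. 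You instead enforce only first-moment matching $\mathbb{E}^{\textbf{g}}[X_t]=\mathbb{E}^{\textbf{g}}[\hat{X}_t]$, which is why you (correctly, for that weaker condition) need extra structure on the cost. The trade-off is that the paper's stronger condition raises a question its proof does not address: since $W_t$ is independent of the data generating $U_t$, the error $X_{t+1}-\hat{X}_{t+1}$ contains the term $(\textbf{D}_t-\hat{\textbf{D}}_t)W_t$, which no admissible control can cancel almost surely unless $\textbf{D}_t=\hat{\textbf{D}}_t$; the paper also silently assumes the stage cost and the penalty are minimized by the same $u_t$ and interchanges the minimum with the sum over stages. Your explicit use of Proposition \ref{theo:CPSstate} to recover $\hat{x}_{0:T}$ from the information state, and your appeal to Theorem \ref{theo:y_t} and the dynamic-programming decomposition to pass from $\mathcal{G}^s$ to all of $\mathcal{G}$, are steps the paper's proof omits entirely and make your outline more complete on those points.
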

\begin{proof}
  Suppose that the minimum value of the cost function $c_T(X_T)$ occurs at $X_T= x_T\in\mathbb{R}^n$. Hence, 
    \begin{align}
    c_T(X_T=x_T) = c_T(\hat{X}_T=x_T).
    \end{align}
    
  Suppose that the minimum value of the cost function $c_t(X_t,U_t)$ at $t =0,\ldots,T-1$ occurs at $X_t= x_t\in\mathbb{R}^n$ and corresponds to the optimal control $U_t=u_t^*$.   
  Then, the minimum value in the one-time-step cost in \eqref{theo:CPSmodelgo} at  $t= 0,\ldots,T-1$ is when the expected value of the cost function is $c_t(x_t,u_t^*)$ and $\mathbb{E}[|X_{t+1} - \hat{X}_{t+1}|^2]=0,$ hence
	\begin{align}
		\min_{u_t}\mathbb{E}^{\textbf{g}}\Big[ c_t(X_t,u_t)+ \beta\cdot|X_{t+1} - \hat{X}_{t+1}|^2\Big]
            =c_t(x_t,u_t^*).
		\label{onetime}	
  	\end{align}	

    Since at each time $t= 0,\ldots, T-1$, the separated control strategy $\textbf{g}\in\mathcal{G}^s$ yields a control input $u_t'  = g_t\big(p(x_{t},\hat{x}_{t}~|~y_{0:t}, u_{0:t-1})\big)$ such that
	\begin{align}\label{argmin}
u_t' &=\arg\min_{u_t} \mathbb{E}\Big[c_t(X_t,u_t)+ \beta\cdot|X_{t+1} - \hat{X}_{t+1}|^2\Big]\nonumber\\
&= \arg\min_{u_t} c_t(x_t,u_t^*),
 	\end{align}	
   this implies that $u_t'=u_t^*$. 
   
   By summing up all minimum expected values of the cost function $c_t(\cdot, \cdot)$ at each $t=0,\ldots,T-1$ and $c_T(\cdot)$ at $t=T$ corresponding to $\textbf{g}\in\mathcal{G}^s$, we obtain \eqref{eq:CPScost}.
\end{proof}

\section{Illustrative Example} \label{sec:4}

In this section, we present a simple example of deriving the optimal control strategy for a  linear system by separating the learning and control tasks. The purpose of the example is to demonstrate in simple steps the proposed framework.
The primitive random variables, i.e., the initial state, $X_0$, and disturbance, $W_0,$ of the system,  are Gaussian random variables with zero mean, variance $1$, and covariance $0.5$. The state of the actual system is denoted  by $\hat{X}_t, ~t=0, 1,2.$ The evolution of the system is described by the following equations

\begin{align}\label{eq:example1}
	\hat{X}_0 &= X_0, \nonumber\\
	\hat{X}_1 &= \hat{X}_0 +U_0 + W_0 = X_0 +U_0 + W_0, \nonumber\\
	\hat{X}_2 &= \hat{X}_1 + U_1.
\end{align}

We assume that we have a complete observation of the state, i.e.,  
\begin{align}
	\hat{Y}_t = \hat{X}_{t}, \quad t=0, 1,2.
\end{align}

A control strategy $\textbf{g}=\{g_t;~ t=0,1\}$, $\textbf{g}\in\mathcal{G}$, where  $g_t$ is the control law, yields the control action $U_t,$ $t = 0,1,$ of the system, i.e.,
\begin{align}\label{eq:controlexample}
	U_0 &=g_0(\hat{Y}_{0})= g_0(\hat{X}_{0})=g_0(X_{0}),\\
	U_1 &=g_1(\hat{Y}_{0:1}, U_{0})=g_1(\hat{X}_{0:1}, U_{0})=g_1(X_0, \hat{X}_{1}, U_{0}).
\end{align}

We seek to derive the optimal control strategy $\textbf{g}^*\in\mathcal{G}$ of the system represented in \eqref{eq:example1} which minimizes the following  expected cost:
\begin{align}
	J(\textbf{g})&=\min_{u_0\in\mathcal{U}_0, u_1\in\mathcal{U}_1}\frac{1}{2}\mathbb{E}^{\textbf{g}}\left[(\hat{X}_2)^2 + (U_1)^2\right].
\end{align}

We pretend that the dynamics of the system in \eqref{eq:example1} (the actual system) are not known, but  we have available the following model that can be used to  obtain $\textbf{g}\in\mathcal{G}$:
\begin{align}\label{eq:example11}
	X_0 &= X_0,\nonumber\\
	X_1 &= 3 X_0 + 2 U_0 + 2 W_0, \nonumber\\
	X_2 &= 3 X_1 + 3 U_1,
\end{align}
with	
\begin{align}
	Y_t = X_{t},~ t= 0,1,2.
\end{align}
From \eqref{eq:example1} and \eqref{eq:example11}, we note that there exists a  discrepancy between the actual system and the model that is available.

\subsection{Optimal Control Strategy}
First, we obtain the optimal control strategy $\textbf{g}^*\in\mathcal{G}$ of the actual system using  \eqref{eq:example1}.

The cost for the actual system \eqref{eq:example1} is
\begin{align}\label{eq:example6}
	J(\textbf{g})&=\min_{u_0\in\mathcal{U}_0, u_1\in\mathcal{U}_1}\frac{1}{2}\mathbb{E}^{\textbf{g}}\left[(\hat{X}_2)^2 + (U_1)^2\right]\nonumber\\
	&=\min_{u_0\in\mathcal{U}_0, u_1\in\mathcal{U}_1}\frac{1}{2}\mathbb{E}^{\textbf{g}}\left[(\hat{X}_1 + U_1)^2 + (U_1)^2 \right]\nonumber\\
	&=\min_{u_0\in\mathcal{U}_0, u_1\in\mathcal{U}_1}\frac{1}{2}\mathbb{E}^{\textbf{g}}\left[(X_0 + U_0 +W_0 + U_1)^2 + (U_1)^2 \right].
\end{align}

If the dynamics of the actual system given in \eqref{eq:example1} were known, then we could use \eqref{eq:example6} to derive the optimal control strategy $\textbf{g}^*\in\mathcal{G}$.
Since the primitive random variables are Gaussian with zero mean, variance $1$, and covariance $0.5,$ we can use the linear least-squares estimator to compute the unique optimal solution of  \eqref{eq:example6}, which is

\begin{align}\label{eq:example8}
	U_0 = -\frac{1}{2}X_0,\quad U_1=-\frac{1}{4}X_0 - \frac{1}{2}W_0.
\end{align}

\subsection{Solution  Through Separation Between Learning and Control}
In this section, we consider that the dynamics of the actual system \eqref{eq:example1} are not known, but we have the model \eqref{eq:example11} of the system available. Using this model,  we can obtain the optimal control strategy by applying the framework presented in Section \ref{sec:3}. More specifically, we use  \eqref{eq:example11}  and seek to derive the separated control strategy $\textbf{g}\in\mathcal{G}^s$, $\textbf{g}=\{g_t;~t = 0,1\}$, where the control laws are  $g_0\big(\mathbb{P}(X_0,\hat{X}_0~|~Y_{0})\big)$ and $g_1\big(\mathbb{P}(X_1,\hat{X}_1~|~Y_{0}, Y_1, U_{0})\big)$, that minimizes the following  cost (see Theorem \ref{theo:CPSmodel}),
\begin{align}\label{eq:cost1}
	&J(\textbf{g};\hat{x}_{0:2})\nonumber\\
	&=\min_{u_0\in\mathcal{U}_0, u_1\in\mathcal{U}_1}\frac{1}{2}\mathbb{E}^{\textbf{g}}\big[(X_2)^2 + (U_1)^2 \nonumber\\
	&+\beta (X_1-\hat{X}_1)^2 + \beta (X_2 - \hat{X}_2)^2)~|~X_0, X_1, U_0\big].
\end{align}
From \eqref{eq:example11} and taking $\beta=1$, \eqref{eq:cost1} becomes

\begin{align}
	&J(\textbf{g};\hat{x}_{0:2})\nonumber\\
	&=\min_{u_0\in\mathcal{U}_0, u_1\in\mathcal{U}_1}\frac{1}{2}\mathbb{E}^{\textbf{g}}\Big[(3 X_1 + 3 U_1)^2 + (U_1)^2\nonumber
\end{align}
\begin{align}\label{eq:cost2}
	& + (X_1-\hat{X}_1)^2 + (X_2 - \hat{X}_2)^2)~|~X_0, X_1, U_0\Big]\nonumber\\
	&=\min_{u_0\in\mathcal{U}_0, u_1\in\mathcal{U}_1}\frac{1}{2}\mathbb{E}^{\textbf{g}}\Big[\big(3 (3 X_0 + 2 U_0 + 2 W_0) + 3 U_1\big)^2 \nonumber\\
	&+ (U_1)^2 + (X_1-\hat{X}_1)^2 + (X_2 - \hat{X}_2)^2)~|~X_0, X_1, U_0\Big].
\end{align}
The cost in \eqref{eq:cost2} becomes equal to the  original cost in \eqref{eq:example6}, if the control action $U_0$ and $U_1$ make the last two terms equal to zero, i.e.,
\begin{align}
	&\mathbb{E}^{\textbf{g}}[X_1 - \hat{X}_1] = \mathbb{E}^{\textbf{g}}[3 X_0 + 2 U_0 + 2 W_0-\hat{X}_1~|~X_0] = 0, \label{eq:cost3a}\\
	& \mathbb{E}^{\textbf{g}}[X_2 - \hat{X}_2] = \mathbb{E}^{\textbf{g}}[3 X_1 + 3 U_1 - \hat{X}_2~|~X_0, X_1, U_0] = 0. \label{eq:cost3b}
\end{align}

From \eqref{eq:cost3a}, it follows that
\begin{align}
	& \mathbb{E}^{\textbf{g}}[U_0] = \mathbb{E}^{\textbf{g}}\Big[ \frac{\hat{X}_1 - 3 X_0 - 2 W_0}{2}~|~X_0\Big] \nonumber\\
	&= g_0\big(p(X_0,\hat{X}_0~|~X_0)\big). \label{eq:cost4a}
\end{align}

Similarly, from \eqref{eq:cost3b}, it follows that
\begin{align}
	\mathbb{E}^{\textbf{g}}[U_1] &= \mathbb{E}^{\textbf{g}}\Big[\frac{\hat{X}_2 - 3 X_1}{3}~|~X_0, X_1, U_0\Big] \nonumber\\
	&= \mathbb{E}^{\textbf{g}}\Big[\frac{\hat{X}_2 - 3 (3X_0 +2 U_0+2W_0)}{3}~|~X_0, X_1, U_0\Big] \nonumber\\
	&= \mathbb{E}^{\textbf{g}}\Big[\frac{\hat{X}_2 - 9 X_0 - 6 U_0 - 6 W_0}{3}~|~X_0, X_1, U_0\Big] \nonumber\\
	&= g_1 \big(p(X_1,\hat{X}_1~|~X_0, X_1, U_0)\big). \label{eq:cost4b}
\end{align}

Thus, $U_0$ and $U_1$ in \eqref{eq:cost4a} and \eqref{eq:cost4b}, respectively, are parameterized with respect to the expected values of the state of the actual system, i.e., $\hat{x}_0= x_0$, $\hat{x}_1$ and $\hat{x}_2$, and  make the last two terms in \eqref{eq:cost2} vanish. 

Next, we use the control actions $U_0$ and $U_1$ derived by the separated control strategies  $g_0\big(p(X_0,\hat{X}_0~|~X_0)\big)$ and $g_1 \big(p(X_1,\hat{X}_1~|~X_0, X_1, U_0)\big)$ in \eqref{eq:cost4a} and \eqref{eq:cost4b}, respectively, to control both the actual linear system \eqref{eq:example1} and the model \eqref{eq:example11} (see Fig. \ref{fig:2}). As we operate both systems,  we compute  the information states $p(X_{0},\hat{X}_{0} ~|~ X_{0})$ and $p(X_1, \hat{X}_{1} ~|~ X_{0}, X_{1}, U_{0})$. However, from Proposition \ref{theo:CPSstate}, we know that to compute $p(X_{0},\hat{X}_{0} ~|~ X_{0})$ and $p(X_1, \hat{X}_{1} ~|~ X_{0}, X_{1}, U_{0}),$ it is sufficient to compute the conditional probabilities $p(X_{0} ~|~ X_{0})$, $p(X_{1} ~|~ X_{0},X_{1}, U_{0})$, and $p(\hat{X}_{0} ~|~ \hat{X}_{0}, \hat{X}_{1}, U_{0})$, and to learn $p(\hat{X}_{0},\hat{X}_{1}~|~U_{0},U_{1})$. Once we compute these conditional probabilities, the expected values of $U_0$ and $U_1$ in \eqref{eq:cost4a} and \eqref{eq:cost4b} become known.

By substituting \eqref{eq:cost4a} in \eqref{eq:cost2}, we obtain
\begin{align}\label{eq:cost5}
	J(\textbf{g};\hat{x}_{0:2}) & = \min_{u_0\in\mathcal{U}_0, u_1\in\mathcal{U}_1}\frac{1}{2}\mathbb{E}^{\textbf{g}}\Big[\big(3 (3 X_0 + 2 \frac{\hat{X}_1 - 3 X_0 - 2W_0}{2} \nonumber\\
	&+ 2 W_0) + 3 U_1\big)^2+ (U_1)^2  ~|~X_0, X_1, U_0\Big] \nonumber\\
	& =  \min_{u_0\in\mathcal{U}_0, u_1\in\mathcal{U}_1}\frac{1}{2}\mathbb{E}^{\textbf{g}}\Big[\big(3 \hat{X}_1 + 3 U_1\big)^2 + (U_1)^2  \nonumber\\
	&~|~X_0, X_1, U_0\Big]\nonumber\\
	& =  \min_{u_0\in\mathcal{U}_0, u_1\in\mathcal{U}_1}\frac{1}{2}\mathbb{E}^{\textbf{g}}\Big[\big(3 (X_0 +U_0 + W_0) + 3 U_1\big)^2 \nonumber\\
	&+ (U_1)^2  ~|~X_0, X_1, U_0\Big].
\end{align}

However,  at $t=0$, we do not consider $U_1$ and $X_1$. Thus, the last equation becomes

\begin{align}\label{eq:cost7}
	 \min_{u_0\in\mathcal{U}_0}\frac{1}{2}\mathbb{E}^{\textbf{g}}\Big[\big(3 (X_0 +U_0 + W_0) \big)^2 ~|~X_0\Big].
\end{align}
The optimization problem above is to choose for each value $X_0$ the best estimate, in a mean squared error sense, of $(X_0 +U_0 + W_0),$ which yields $U_0 = -\frac{1}{2}X_0$ which is the same solution as in \eqref{eq:example8}. By substituting \eqref{eq:cost4b} into  the model \eqref{eq:example11}, we obtain
\begin{align}\label{eq:cost8}
	X_2 &= 3 X_1 + 3 \frac{\hat{X}_2 - 9 X_0 - 6 U_0 - 6 W_0}{3} \nonumber\\
	&= 3 (3 X_0 + 2 U_0 + 2 W_0) + \hat{X}_2 - 9 X_0 - 6 U_0 - 6 W_0\nonumber\\
	&= \hat{X}_2 ,
\end{align}
hence the expected total cost $J(\textbf{g};\hat{x}_{0:2})$ in \eqref{eq:cost1} becomes

\begin{align}\label{eq:cost9}
	J(\textbf{g};\hat{x}_{0:2}) &= \min_{u_0\in\mathcal{U}_0, u_1\in\mathcal{U}_1}\frac{1}{2}\mathbb{E}^{\textbf{g}}\Big[(\hat{X}_2)^2 + (U_1)^2 )\Big] \nonumber\\
	&= \min_{u_0\in\mathcal{U}_0, u_1\in\mathcal{U}_1}\frac{1}{2}\mathbb{E}^{\textbf{g}}\Big[(\hat{X}_1 + U_1)^2 + (U_1)^2 )\Big].
\end{align}

The minimum in \eqref{eq:cost9} at time $t=1$ can be found by taking the partial derivative with respect to $U_1$ which yields

\begin{align}\label{eq:cost10}
	&\mathbb{E}^{\textbf{g}}\Big[\big( \hat{X}_1 + U_1 + U_1 \big)  \Big] = \mathbb{E}^{\textbf{g}}\Big[\big( X_0 + U_0 +W_0 + U_1 + U_1 \big)  \Big] \nonumber\\
 &= 0
\end{align}
that results in  the same solution $U_1=-\frac{1}{4}X_0 - \frac{1}{2}W_0$ as in \eqref{eq:example8}.

\section{Concluding Remarks}\label{sec:5}
In this paper, we presented a theoretical framework that provides a data-driven  approach for linear systems at the intersection of learning and control. 
The framework  separates the control and learning tasks which allows us to combine offline model-based control with online learning approaches and thus circumvent current challenges in deriving optimal control strategies. 
One feature that distinguishes the framework presented here from other learning-based or combined learning and control approaches reported in the literature is that the large volume of data added to the system is compressed to sufficient
statistics, without loss of optimality, that takes values in a time-invariant space. Hence, as the volume of data added to the systems increases,  the domain of the control strategies does not increase with time. Ongoing research investigates the computational implications of learning the information state.
In our exposition, we restricted attention to centralized control systems. A potential future research direction includes expanding the framework to decentralized systems \cite{Malikopoulos2021}.

\bibliographystyle{IEEEtran.bst} 
\bibliography{reference/TAC_learn_Andreas,reference/TAC_Ref_Andreas,reference/TAC_Ref_IDS,reference/TAC_Ref_structure,reference/TAC2_learn}

\end{document}